\documentclass[11pt, oneside]{article}   	
\usepackage[scale=0.70]{geometry}                		
\geometry{letterpaper}                   		
\usepackage{graphicx}				
\usepackage[dvipsnames]{xcolor}
\usepackage[linktocpage]{hyperref}
\hypersetup{
    colorlinks=true,
    linkcolor=black,
    hypertexnames=false,
    filecolor=black,      
    urlcolor=black,
    citecolor=black
}
\urlstyle{sf}

\usepackage{amssymb}
\usepackage{amsmath}
\usepackage{amsthm}
\usepackage{enumitem}
\usepackage{authblk}

\newtheorem{theorem}{Theorem}
\newtheorem{lemma}[theorem]{Lemma}
\newtheorem{proposition}[theorem]{Proposition}

\newtheorem{conjecture}[theorem]{Conjecture}

\theoremstyle{definition}
\newtheorem{defn}[theorem]{Definition}

\newtheorem{example}[theorem]{Example}
\newtheorem{remark}[theorem]{Remark}

\def\B{\mathbf B}
\def\C{\mathbb C}
\def\H{\mathbb H}
\def\P{\mathbb{P}}
\def\N{\mathbb N}
\def\R{\mathbb R}

\def\<{\langle}
\def\>{\rangle}
\def\D{\partial}

\def\ol{\overline}
\def\Ga{\Gamma}
\def\Om{\Omega}
\def\ga{\gamma}

\def\La{\Lambda}
\def\Aut{\operatorname{Aut}}
\def\inter{\operatorname{int}}
\def\ra{\rightarrow}

\DeclareMathOperator{\Hess}{Hess}
\DeclareMathOperator{\isom}{Isom}

\DeclareMathOperator{\vol}{Vol}

\title{A note on complex-hyperbolic Kleinian groups}
\author{Subhadip Dey\thanks{ Dept. of Math., UC Davis,
One Shields Ave, Davis, CA 95616, email: \texttt{sdey@math.ucdavis.edu}}\ \qquad
Michael Kapovich\thanks{
Dept. of Math., UC Davis,
One Shields Ave, Davis, CA 95616, email: \texttt{kapovich@math.ucdavis.edu}}}
\date{May 2, 2020}							

\begin{document}
\maketitle

\begin{abstract}
 Let $\Ga$ be a discrete group of isometries acting on the complex hyperbolic $n$-space $\H^n_\C$. In this note, we prove that if $\Ga$ is convex-cocompact, torsion-free, and the critical exponent $\delta(\Ga)$ is strictly lesser than $2$, then the complex manifold $\H^n_\C/\Ga$ is Stein. We also discuss several related conjectures.
 \end{abstract}

The theory of complex hyperbolic manifolds and complex-hyperbolic Kleinian groups (i.e. discrete holomorphic isometry groups of complex hyperbolic spaces $\H^n_\C$) is a rich mixture of Riemannian and complex geometry, topology, dynamics, symplectic geometry and complex analysis. The purpose of this note is to discuss 
interactions of the theory of complex-hyperbolic Kleinian groups and the function theory of complex-hyperbolic manifolds. 
Let $\Gamma$ be a discrete group of isometries acting on the complex-hyperbolic $n$-space, $\H^n_\C$, the unit ball $\B^n\subset \C^n$ equipped with the Bergmann metric.
A fundamental numerical invariant associated with $\Ga$ is the {\em critical exponent} $\delta(\Ga)$ of $\Gamma$, defined by
\[
\delta(\Ga) = \inf\left\{ s : \sum_{\gamma\in\Ga} e^{-s\cdot d(x,\gamma x)} <\infty \right\},
\]
where $x\in \H^n_\C$ is any\footnote{$\delta(\Ga)$ does not depend on the choice of $x\in \H^n_\C$.} point. The critical exponent measures the rate of exponential growth the $\Ga$-orbit $\Ga x\subset  \H^n_\C$; it also equals the Haussdorff dimension of the conical limit set of $\Ga$, see \cite{Corlette} and \cite{CO}. 

Our main result is: 

\begin{theorem}\label{thm:stein}
Suppose that $\Ga< \Aut(\B^n)$ is a convex-cocompact, torsion-free discrete subgroup satisfying $\delta(\Ga)<2$. Then $M_\Ga = \B^n/\Ga$ is Stein.   
\end{theorem}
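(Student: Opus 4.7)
My plan is to combine Grauert's theorem on Remmert reductions of strongly pseudoconvex manifolds with a volume--growth argument that exploits the hypothesis $\delta(\Ga)<2$. Convex cocompactness means that $\Ga$ acts properly discontinuously and cocompactly on $\B^n\cup\Om$, where $\Om=\DB\setminus\La$ is the domain of discontinuity, so that
\[
\ol{M_\Ga}\;=\;(\B^n\cup\Om)/\Ga
\]
is a compact complex manifold whose boundary $\Om/\Ga$ inherits strict pseudoconvexity from $\DB$. By Grauert's theorem, $M_\Ga$ is then holomorphically convex, and its Remmert reduction $\pi\colon M_\Ga\to M_\Ga^*$ is a proper holomorphic surjection onto a normal Stein space whose positive-dimensional fibres are precisely the maximal compact connected analytic subvarieties of $M_\Ga$. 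Hence it suffices to prove that $M_\Ga$ contains no compact analytic subvariety of positive complex dimension; then $\pi$ will be a biholomorphism and $M_\Ga$ itself will be Stein.

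To rule out such subvarieties I would argue by contradiction. Let $V\subset M_\Ga$ be a connected compact analytic subvariety of complex dimension $k\geq 1$, let $\tilde V_0\subset\B^n$ be a connected component of its preimage under the covering $\B^n\to M_\Ga$, and write $\Ga_V=\{\gamma\in\Ga:\gamma\tilde V_0=\tilde V_0\}$ for its setwise stabiliser. Then $\tilde V_0$ is a closed complex analytic subvariety of $\B^n$, and $\tilde V_0/\Ga_V\cong V$ is compact. Complex analytic subvarieties of $\B^n$ are calibrated by the Bergmann K\"ahler form $\omega=i\,\D\ol\D\log\frac{1}{1-|z|^2}$ and hence volume--minimising, so comparison with a totally geodesic $\H^k_\C\subset\H^n_\C$ should yield an exponential lower bound
\[
\vol_{2k}\bigl(\tilde V_0\cap B(x,R)\bigr)\;\geq\; c\,e^{2kR}
\]
for every $x\in\tilde V_0$ and all sufficiently large $R$. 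Covering $\tilde V_0\cap B(x,R)$ by $\Ga_V$-translates of a compact fundamental domain for the $\Ga_V$-action on $\tilde V_0$ then converts this into an orbit--count lower bound
\[
\#\bigl(\Ga_V\cdot x\cap B(x,R)\bigr)\;\geq\; c'\,e^{2kR},
\]
forcing $\delta(\Ga_V)\geq 2k\geq 2$. But $\Ga_V\leq\Ga$ gives $\delta(\Ga_V)\leq\delta(\Ga)<2$, which is the desired contradiction.

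The hard part will be this exponential volume lower bound for an arbitrary (not necessarily totally geodesic) closed complex analytic subvariety $\tilde V_0$ in the Bergmann metric. The bound is transparent when $\tilde V_0$ is a totally geodesic $\H^k_\C$, and the expectation is that it extends to the general case either via a monotonicity formula for minimal submanifolds in pinched negative curvature, or---perhaps more cleanly in this K\"ahler setting---by integrating powers of $\omega$ against the current of integration on $\tilde V_0$ over sublevel sets of $-\log(1-|z|^2)$ and applying a Chern--Levine--Nirenberg / Lelong--Jensen estimate. The remaining ingredients (strict pseudoconvexity at $\Om/\Ga$, Grauert's reduction, and monotonicity $\delta(\Ga_V)\leq\delta(\Ga)$ under passage to subgroups) are all standard.
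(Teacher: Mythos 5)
Your reduction is the same as the paper's: strong pseudoconvexity of $\ol{M}_\Ga$ plus Grauert/Rossi (equivalently, the Remmert reduction being injective) reduces everything to showing that $M_\Ga$ has no compact complex subvarieties of positive dimension. Where you genuinely diverge is in how you rule those out, and that is exactly where your proof has a hole that you yourself flag: the exponential volume lower bound $\vol_{2k}(\tilde V_0\cap B(x,R))\ge c\,e^{2kR}$ is not established, and the first of your two suggested routes to it cannot work. The monotonicity formula for minimal submanifolds in a Cartan--Hadamard manifold only sees the \emph{upper} curvature bound, which for the Bergmann metric normalized as in the paper is $-1$; comparison with the real space form therefore gives $\vol_{2k}(\tilde V_0\cap B(x,R))\gtrsim e^{(2k-1)R}$, hence only $\delta(\Ga_V)\ge 2k-1$. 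For curves ($k=1$) this yields $\delta\ge 1$, which does not contradict $\delta(\Ga)<2$. This is not an accident: it is precisely the quantitative loss that confines the paper's own volume-type argument (Proposition \ref{prop:2k}, via Besson--Courtois--Gallot natural maps) to the range $\delta<2k-1$, while the sharp statement for $\delta<2k$ is left as a conjecture there. The extra factor of $e^R$ must come from the complex structure, not from minimality alone.

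Your second suggested route (Lelong--Jensen with respect to $\phi=-\log(1-|z|^2)$) can in fact be pushed through, but it is a real lemma, not a citation. For a closed curve $V\ni 0$ one uses the identity $i\partial\bar\partial\phi=\frac{\beta}{1-|z|^2}+i\partial\phi\wedge\bar\partial\phi$ (with $\beta$ the Euclidean K\"ahler form), so that on $V$ the hyperbolic area form dominates $\frac12\,d\phi\wedge d^c\phi$; writing $\sigma(t)=\int_{V\cap\{\phi<t\}}i\partial\bar\partial\phi$, Stokes and the coarea formula give
\begin{equation*}
\sigma(t)\;\ge\;\int_{V\cap\{\phi<t\}}i\,\partial\phi\wedge\bar\partial\phi\;=\;\int_0^t\sigma(s)\,ds,
\end{equation*}
and Gronwall yields $\sigma(t)\ge c\,e^{t}$, i.e.\ $c\,e^{2R}$ since $\phi\approx 2d(0,\cdot)$; a similar bootstrapping handles $k$-dimensional subvarieties (where even the weaker bound $c\,e^{2R}$ suffices for your contradiction). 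So your strategy is salvageable and would in fact prove something stronger than the paper's Theorem \ref{thm:no_subv}, but as written the crucial estimate is missing and the ``monotonicity in pinched negative curvature'' fallback would fail quantitatively. By contrast, the paper sidesteps volume growth entirely: it uses the convergence of the Poincar\'e series $\sum_{\ga}(1-|\ga(z)|^2)$ for $\delta(\Ga)<2$ to build $\Ga$-invariant plurisubharmonic functions, and a residual-finiteness trick to make one of them nonconstant on the lift of a putative compact subvariety, contradicting the maximum principle. That argument is softer and needs no geometric measure theory, which is why it closes the case $\delta\in[1,2)$ that defeats the volume method in its naive form.
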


The condition on the critical exponent in the above theorem is sharp since, for a complex Fuchsian subgroup $\Ga< \Aut(\B^n)$, $\delta(\Ga) = 2$, but the quotient $M_\Ga = \B^n/\Ga$ is non-Stein because the convex core of $M_\Ga$ is a complex curve, see Example \ref{ex:cFuch}.
On the other hand, if $\Ga$ is a torsion-free real Fuchsian subgroup or a small deformation of such (see Example \ref{ex:rFuch}), then $\Ga$ satisfies the condition of the above theorem.

The main ingredients in the proof of Theorem \ref{thm:stein} are Proposition \ref{prop:stein} and Theorem \ref{thm:no_subv}. The condition ``convex-cocompact'' is only used in Proposition \ref{prop:stein}, whereas Theorem \ref{thm:no_subv} holds for any torsion-free discrete subgroup $\Ga<\Aut(\B^n)$ satisfying $\delta(\Ga)<2$. 


\begin{conjecture}
Theorem \ref{thm:stein} holds  if we omit the ``convex-cocompact'' assumption on $\Ga$. 
\end{conjecture}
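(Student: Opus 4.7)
The strategy is to produce a strictly plurisubharmonic exhaustion function directly on $M_\Gamma$: by Grauert's solution to the Levi problem this yields Steinness, and by Theorem~\ref{thm:no_subv} the geometric obstruction (compact positive-dimensional analytic subvarieties) is absent without any convex-cocompactness assumption. So the task reduces to the analytic construction of an exhaustion, with the critical-exponent bound as the only hypothesis to exploit.

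The natural attempt is a Poincaré-series construction. The Bergmann potential $u(z)=-\log(1-|z|^2)$ is a strictly plurisubharmonic exhaustion of $\B^n$ satisfying $u(z)\asymp d_{\mathrm{Berg}}(0,z)$ near the boundary. One would form a $\Gamma$-invariant function
\[
\Phi(z) \;=\; \sum_{\gamma\in\Gamma}\, \chi\bigl(u(\gamma z)\bigr)
\]
for a convex nondecreasing profile $\chi$ chosen so that each summand is plurisubharmonic, the sum converges locally uniformly on $\B^n$ via a Poincaré-series bound at some exponent $s\in(\delta(\Gamma),2)$, and the limit is proper on the quotient. Invariance and plurisubharmonicity would then descend $\Phi$ to a strictly psh exhaustion of $M_\Gamma$.

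The fundamental obstacle is a well-known dichotomy: the natural strictly psh functions on $\B^n$ of the form $(1-|z|^2)^{-s}$ blow up at $\DB$, so $\sum_\gamma (1-|\gamma z|^2)^{-s}$ diverges identically for any infinite $\Gamma$, whereas the decaying candidates $(1-|z|^2)^{s}$ with $s>0$ are plurisuperharmonic, not plurisubharmonic. Breaking this tension would require a more delicate building block --- e.g.\ a truncated or max-regularized variant of $u$ in the style of Demailly --- whose $\Gamma$-orbit sum is simultaneously psh, convergent (for $s\in(\delta(\Gamma),2)$) and proper on $M_\Gamma$; the role of the upper bound $s<2$ should be to secure strict plurisubharmonicity after summation, reflecting the dimensional threshold beyond which distance-exponential kernels on $\H^n_\C$ cease to be psh.

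The chief alternative route is structural: establish first that $\delta(\Gamma)<2$ forces $\Gamma$ to be geometrically finite. With geometric finiteness in hand, the extension of Proposition~\ref{prop:stein} beyond convex-cocompactness would reduce to constructing local Stein neighborhoods of the cusps and patching via a $\bar\partial$-cohomology vanishing argument (controllable because the critical exponent constrains the rank of each parabolic subgroup). The truly hard case --- and in our view the real obstruction to the conjecture --- is a hypothetical geometrically infinite $\Gamma$ with $\delta(\Gamma)<2$: the ends of $M_\Gamma$ may then carry noncompact topology escaping to infinity, no standard patching technique applies, and one seems forced back to the analytic plan of the previous paragraph with all its difficulties intact.
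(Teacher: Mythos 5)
The statement you are addressing is a \emph{conjecture}: the paper offers no proof of it, and your proposal does not supply one either. What you have written is a candid survey of strategies together with an explanation of why each one stalls; by your own account the ``fundamental obstacle'' and ``all its difficulties'' remain ``intact.'' That is not a gap in an otherwise complete argument --- there is no argument. To be useful as a referee report on your own text: your diagnosis of where the difficulty sits is accurate and matches the paper's framing. The authors note explicitly that Theorem~\ref{thm:no_subv} (no compact positive-dimensional subvarieties) already holds for arbitrary torsion-free discrete $\Ga$ with $\delta(\Ga)<2$, and that convex-cocompactness enters only through Proposition~\ref{prop:stein}, i.e.\ through the Grauert--Rossi machinery that converts a \emph{compact} strongly pseudoconvex quotient $\ol{M}_\Ga$ into a holomorphically convex one. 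So the entire content of the conjecture is the holomorphic convexity (equivalently, the existence of a psh exhaustion) of $M_\Ga$ when $\ol{M}_\Ga$ is noncompact, and you have correctly isolated that.

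Two specific cautions about the routes you sketch. First, the Poincar\'e-series dichotomy you describe is real and is visible already in the paper's own construction: the function $F(z)=\sum_\ga(|\ga z|^2-1)$ is psh and convergent precisely because each summand is \emph{negative} and psh, but for the same reason $F$ is bounded above by $0$ and is nowhere close to being an exhaustion of $M_\Ga$; it detects compact subvarieties via the maximum principle and nothing more. Any truncation or regularization \`a la Demailly that restores properness must be shown not to destroy plurisubharmonicity after summation over $\Ga$, and you give no candidate for which this can be checked --- this is the missing idea, not a technicality. Second, the structural route is built on the unproven (and, as stated, likely false) premise that $\delta(\Ga)<2$ forces geometric finiteness; the conjecture does not even assume $\Ga$ finitely generated, so infinitely generated and geometrically infinite groups are squarely in scope, and you concede that no technique is available there. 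In short: the proposal should be presented as a discussion of the conjecture, not as a proof attempt, since no step of it closes the problem.
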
 

In section \ref{sec:remarks} we discuss other conjectural generalizations of Theorem \ref{thm:stein} and supporting results. 

\medskip 

\noindent
{\bf Acknowledgement.} The second author was partly supported by the NSF grant  DMS-16-04241.

\section{Preliminaries}

In this section, we recall some definitions and basic facts about the $n$-dimensional complex hyperbolic space, we refer to \cite{Goldman, Kapovich} for details.

Consider the $n$-dimensional complex vector space $\C^{n+1}$ equipped with the pseudo-hermitian bilinear form
\begin{equation}\label{eqn:hermitianform}
\<z, w\>=-z_0 \bar{w}_0 + \sum_{k=1}^n z_k \bar{w}_k  
\end{equation}
and define the quadratic form $q(z)$ of signature $(n,1)$ by $q(z):= \<z, z\>$.
Then $q$ defines the {\em negative light cone} $V_-:= \{z: q(z)<0\} \subset \C^{n+1}$.
The projection of $V_-$ in the projectivization of $\C^{n+1}$, $\P^{n}$, is an open ball which we denote by $\B^n$.

The tangent space $T_{[z]} \P^n$ is naturally identified with $z^\perp$, the orthogonal complement of $\C z$ in $V$, taken with respect to $\<\cdot, \cdot\>$. 
If $z\in V_-$, then the restriction of $q$ to $z^\perp$ is positive-definite, hence, $\<\cdot, \cdot\>$ 
project to a hermitian metric $h$ (also denoted $\<\cdot, \cdot\>_h$) on $\B^n$. 
The {\em complex hyperbolic $n$-space} $\H^n_{\C}$ is $\B^n$ equipped with the hermitian metric $h$.
The boundary $\partial\B^n$ of $\B^n$ in $\P^n$ gives a natural compactification of $\B^n$.

In this note, we usually denote the complex hyperbolic $n$-space by $\B^n$.
The real part of the hermitian metric $h$ defines a Riemannian metric $g$ on $\B^n$.
The sectional curvature of $g$ varies between $-4$ and $-1$.
We denote the distance function on $\B^n$ by $d$. The distance function satisfies
\begin{equation}\label{eqn:dist}
\cosh^2(d(0, z))= (1-|z|^2)^{-1}.
\end{equation}

A real linear subspace $W\subset \C^{n+1}$ is said to be {\em totally real} with respect to the form (\ref{eqn:hermitianform}) if for any two vectors 
$z, w\in W$,  $\<z, w\>\in \R$. Such a subspace is automatically totally real in the usual sense: $J W\cap W=\{0\}$, where $J$ is the almost complex structure on $V$. 
{\em (Real) geodesics} in $\B^n$ are projections of totally real indefinite (with respect to $q$) 2-planes in $\C^{n+1}$ (intersected with $V_-$). For instance, geodesics through the origin $0\in \B^n$ are Euclidean line segments in $\B^n$. 
More generally, totally-geodesic real subspaces in $\B^n$ are projections of totally real indefinite subspaces in $\C^{n+1}$ (intersected with $V_-$). They are isometric to the real hyperbolic space $\H^n_\R$ of constant sectional curvature $-1$.

{\em Complex geodesics} in $\B^n$ are projections of indefinite complex 2-planes.
Complex geodesics are isometric to the unit disk with the hermitian metric 
$$
\frac{dz d\bar z}{(1-|z|^2)^2},
$$ 
which has constant sectional curvature $-4$.
More generally, $k$-dimensional complex hyperbolic  subspaces $\H^k_\C$ 
in $\B^n$ are projections of indefinite complex $(k+1)$-dimensional subspaces 
(intersected with $V_-$). 

All complete totally-geodesic submanifolds in $\H^n_\C$ are either real or complex hyperbolic subspaces. 

The group $\mathrm{U}(n,1)\cong \mathrm{U}(q)$ of (complex) automorphisms of the form $q$ projects to the group $\Aut(\B^n) \cong \mathrm{PU}(n,1)$ of complex (biholomorphic, isometric) 
automorphisms of $\B^n$. The group $\Aut(\B^n)$ is linear, its matrix representation is given, for instance, by the adjoint representation, which is faithful since $\Aut(\B^n)$ has trivial center. 

A discrete subgroup $\Ga$ of $\Aut(\B^n)$ is called a {\em complex-hyperbolic Kleinian group}. 
The accumulation set of an(y) orbit $\Ga x$ in $\partial\B^n$ is called the {\em limit set} of $\Ga$ and denoted by $\Lambda(\Gamma)$.
The complement of $\Lambda(\Gamma)$ in $\partial\B^n$ is called the {\em domain of discontinuity} of $\Ga$ and denoted by $\Omega(\Ga)$.
The group $\Ga$ acts properly discontinuously on $\B^n \cup \Omega(\Ga)$. 

For a (torsion-free) complex-hyperbolic Kleinian group $\Ga$, the quotient $\B^n/\Ga$ is a Riemannian orbifold (manifold) equipped with push-forward of the Riemannian metric of $\B^n$.
We reserve the notation $M_\Ga$ to denote this quotient.
The {\em convex core} of $M_\Ga$ is the the projection of the closed convex hull of $\Lambda(\Ga)$ in $\B^n$.
The subgroup $\Ga$ is called {\em convex-cocompact} if the convex core of $M_\Ga$ is a nonempty compact subset. Equivalently (see \cite{Bowditch}), 
$\ol{M}_\Ga=(\B^n \cup \Omega(\Ga))/\Ga$ is compact. 

Below are two  interesting examples of convex-cocompact complex-hyperbolic Kleinian groups which will also serve as illustrations our results.

\begin{example}[Real Fuchsian subgroups]\label{ex:rFuch}
Let $\H^2_\R\subset \B^n$ be a totally real hyperbolic plane. 
This inclusion is induced by an embedding $\rho:\isom(\H^2_\R) = \mathrm{PSL}(2,\R) \ra \Aut(\B^n)$ whose image preserves $\H^2_\R$. 
Let $\Ga'< \isom(\H^2_\R)$ be a uniform lattice. Then $\Ga = \rho(\Ga')$ 
preserves  $\H^2_\R$ and acts on it cocompactly. 
Such subgroups $\Ga<\Aut(\B^n)$ will be called {\em real Fuchsian subgroups}.
The compact surface-orbifold $\Sigma = \H^2_\R/{\Ga}$ is the convex core of $M_{\Ga}$. The critical exponent $\delta(\Ga)$ is 1.

Let $\Ga_t$, $t\ge 0$, be a continuous family of deformations  of $\Ga_0 = \Ga$ in $\Aut(\B^n)$  such that $\Ga_t$'s, for $t>0$, are convex-cocompact but not real Fuchsian.
Such deformation exist as long as $\Ga_t$ is, say, torsion-free, see e.g. \cite{Weil}.
The groups $\Ga_t$, $t>0$, are called {\em real quasi-Fuchsian subgroups}.
The critical exponents of such subgroups are strictly greater than $1$.
\end{example}

\begin{example}[Complex Fuchsian subgroups]\label{ex:cFuch}
Let $\Ga'$ be a cocompact subgroup of $\mathrm{SU}(1,1)$, the identity component isometry group of the real-hyperbolic plane (modulo $\mathbb{Z}_2$) and let $\mathrm{SU}(1,1) \ra \mathrm{SU}(n,1)$ be any embedding.
Note that $\mathrm{SU}(n,1)$ modulo center (isomorphic to $\mathbb{Z}_{n+1}$) is isomorphic to $\mathrm{PU}(n,1)$.
By taking compositions, we get a representation $\rho:\Ga'\ra\mathrm{PU}(n,1)$.
Then $\Ga:= \rho(\Ga')$ leaves a complex geodesic invariant in $\B^n$.
Such subgroups $\Ga$ will be called {\em complex Fuchsian subgroups}.
In this case, $\mathrm{core}(M_{\Ga}) = \H^1_\C/\Ga$ is  a compact complex curve in $M_{\Ga}$ where $\H^1_\C$ is the $\Ga$-invariant complex geodesic.
The critical exponent $\delta(\Ga)$ is $2$.
\end{example}

\section{Generalities on complex manifolds} 
By a {\em complex manifold with boundary} $M$, we mean a smooth manifold with (possibly empty) boundary $\partial M$ 
such that $\inter(M)$ is equipped with a complex structure and that there exists a smooth embedding  
$f: M\to X$ to an equidimensional complex manifold $X$,  biholomorphic on $\inter(M)$.  
A holomorphic function on $M$ is a smooth function which admits a holomorphic extension to a neighborhood of $M$ in $X$. 

Let $X$ be a complex manifold and $Y\subset X$ is a codimension $0$ smooth submanifold with boundary in $X$. The submanifold 
$Y$ is said to be {\em strictly Levi-convex} if  every boundary point of $Y$ 
admits a neighborhood $U$ in $X$ such that the submanifold with boundary $Y\cap U$ can be written  as
$$
\{\phi\le 0\},
$$
for some smooth submersion $\phi: U\to \R$ satisfying
$
\Hess(\phi) >0,  
$
where $\Hess(\phi)$ is the holomorphic Hessian:
$$
\left( \frac{\partial^2 \phi}{\partial \bar{z}_i\partial z_j}\right).  
$$

\begin{defn}
A {\em strongly pseudoconvex manifold} $M$ is a complex manifold with boundary 
which admits a strictly Levi-convex holomorphic embedding in an equidimensional complex manifold. 
\end{defn} 

\begin{defn}
An open complex manifold $Z$ is called {\em holomorphically convex} if for every discrete closed subset $A\subset Z$ there exists a holomorphic function $Z\to \C$ which is proper on $A$. 
\end{defn}

Alternatively,\footnote{and this is the standard definition} one can define holomorphically convex manifolds as follows: For a compact $K$ 
in a complex manifold $M$, the {\em holomorphic convex hull} $\hat{K}_M$ of $K$ in $M$ is 
$$
\hat{K}_M= \{z\in M: |f(z)|\le \sup_{w\in K} |f(w)|, \forall f\in \mathcal{O}_M\}.  
$$
In the above, $\mathcal{O}_M$ denotes the ring of holomorphic functions on $M$.
Then $M$ is holomorphically convex iff for every compact $K\subset M$, the hull $\hat{K}_M$ is also compact. 

\begin{theorem}
[Grauert \cite{G}] The interior of every compact strongly pseudoconvex manifold $M$ is holomorphically convex. 
\end{theorem}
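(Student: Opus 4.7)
The strategy is to construct a smooth strictly plurisubharmonic exhaustion function $\psi$ on $\inter(M)$ and then invoke H\"ormander's $L^2$-theory for the $\bar\partial$-operator to interpolate prescribed values on any given closed discrete set.

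\textbf{Construction of the exhaustion.} Every boundary point of $M$ has a neighborhood $U_p \subset X$ carrying a strictly plurisubharmonic defining function $\phi_p$; since positivity of the complex Hessian is preserved by positive convex combinations, a partition of unity subordinate to a finite cover of $\partial M$ produces a single smooth $\phi$ defined in a collar neighborhood $W$ of $\partial M$ in $X$ with $\inter(M) \cap W = \{\phi < 0\}$, $\partial M = \{\phi = 0\}$, and $\Hess(\phi) > 0$ throughout (after shrinking $W$). Then $\psi_1 := -\log(-\phi)$ is strictly plurisubharmonic on $W \cap \inter(M)$ (the $|\partial\phi|^2/\phi^2$ contribution reinforces the already positive $\Hess(\phi)/(-\phi)$ term) and tends to $+\infty$ at $\partial M$. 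Gluing $\psi_1$ to a large constant across the relatively compact complement via a cutoff, and then adding a partition-of-unity sum of strictly plurisubharmonic quadratic functions in local charts, yields a smooth strictly plurisubharmonic exhaustion $\psi \colon \inter(M) \to \R$.

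\textbf{From exhaustion to holomorphic convexity.} Given a closed discrete set $A \subset \inter(M)$, the exhaustion property of $\psi$ forces $A$ to be countable and admit an enumeration $A = \{a_n\}_{n \ge 1}$ with $\psi(a_n) \to +\infty$. Choose a smooth function $g$ with $g(a_n) = n$ supported in disjoint small neighborhoods of the $a_n$; then $\alpha := \bar\partial g$ is a smooth, $\bar\partial$-closed $(0,1)$-form supported away from $A$. For a convex increasing $\lambda \colon \R \to \R$ growing rapidly enough along the sequence $\psi(a_n)$, together with local singular contributions $2N \log|z - a_n|$ near each $a_n$ for $N \gg 1$, H\"ormander's $L^2$-existence theorem on the (now $1$-complete) manifold $\inter(M)$ produces a solution of $\bar\partial u = \alpha$ whose finite weighted $L^2$ norm forces $u(a_n) = 0$ for every $n$. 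Then $F := g - u$ is holomorphic on $\inter(M)$ with $F(a_n) = n$, so $F$ is proper on $A$, establishing holomorphic convexity in the sense of the Definition above.

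The central technical obstacle is the simultaneous choice of $\lambda$ and the singular weights: $\lambda\circ\psi$ must be convex enough that its Levi form dominates $\alpha$ in the pointwise weighted norm required by H\"ormander's estimate, yet must not grow so fast as to push $\alpha$ out of the corresponding weighted $L^2$ space. This balance is achievable because $A$ is discrete, the supports of $\alpha$ can be shrunk arbitrarily close to the $a_n$'s, and $\lambda$ only needs to be specified inductively on the sublevel sets $\{\psi \le \psi(a_n) + 1\}$; this inductive construction is the standard device at the heart of the proof.
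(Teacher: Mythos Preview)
The paper does not prove this theorem; it simply quotes it as Grauert's result \cite{G}. So there is no proof in the paper to compare against, and the question reduces to whether your argument is correct on its own.

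It is not. The fatal step is the claim that one can produce a \emph{globally} strictly plurisubharmonic exhaustion $\psi\colon \inter(M)\to\R$. If such a $\psi$ existed, $\inter(M)$ would be $1$-complete in the sense of Grauert and hence Stein, not merely holomorphically convex. But the interior of a compact strongly pseudoconvex manifold need not be Stein: the very next theorem quoted in the paper (Rossi's theorem) singles out the additional hypothesis ``no compact positive-dimensional subvarieties'' precisely because without it $\inter(M)$ can contain such subvarieties, and the paper's own complex-Fuchsian example (Example~\ref{ex:cFuch}) exhibits a compact $\ol{M}_\Ga$ whose interior contains a compact complex curve. On such a manifold no strictly plurisubharmonic function exists at all, since its restriction to the curve would violate the maximum principle.

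The specific place your construction breaks is the sentence ``adding a partition-of-unity sum of strictly plurisubharmonic quadratic functions in local charts.'' Plurisubharmonicity is not preserved by partition-of-unity combinations: for a cutoff $\chi$ and a local psh function $u$, the Levi form of $\chi u$ picks up the terms $u\,\partial\bar\partial\chi$ and $2\,\mathrm{Re}(\partial\chi\wedge\bar\partial u)$, which are typically indefinite. There is no mechanism here to cancel the negativity introduced by the earlier cutoff gluing of $\psi_1$ to a constant. What one \emph{can} obtain by your collar construction is an exhaustion that is strictly plurisubharmonic \emph{outside a compact set}; this is the correct notion of $1$-convexity, and Grauert's actual argument proceeds from there via finite-dimensionality of coherent-sheaf cohomology and a Remmert reduction, not via a global H\"ormander estimate on $\inter(M)$ itself.
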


\begin{defn}
A complex manifold $M$ is called {\em Stein} if it admits a proper holomorphic embedding in $\C^n$ for some $n$. 
\end{defn}

Equivalently, $M$ is Stein iff it is holomorphically convex and {\em holomorphically separable}: That is, for every distinct points $x,y\in M$, there exists a holomorphic function $f: M\ra\C$ such that $f(x) \ne f(y)$. We will use: 

\begin{theorem}[Rossi \cite{R2}, Corollary on page 20]\label{thm:Rossi}
 If  a compact complex manifold $M$ is strongly pseudoconvex and contains no compact complex subvarieties of positive dimension, then $\inter(M)$ is Stein. 
\end{theorem}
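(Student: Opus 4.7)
The plan is to combine Grauert's theorem (stated just above) with the Remmert reduction theorem. By Grauert's theorem, $\inter(M)$ is a holomorphically convex complex manifold. The Remmert reduction theorem then produces a proper surjective holomorphic map $\pi : \inter(M) \to N$ onto a Stein complex space $N$, satisfying $\pi_*\mathcal{O}_{\inter(M)} = \mathcal{O}_N$, whose fibers are connected. These fibers are precisely the maximal connected compact analytic subsets of $\inter(M)$, and two points of $\inter(M)$ lie in the same fiber if and only if no global holomorphic function separates them.

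By hypothesis, $\inter(M)$ contains no compact complex subvarieties of positive dimension, so every fiber of $\pi$, being connected and compact, must be a single point. Consequently $\pi$ is a proper holomorphic bijection, hence a biholomorphism onto $N$; in particular $N$ is smooth, and $\inter(M) \cong N$ is Stein.

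The main obstacle is that this route relies on the full strength of the Remmert reduction, whose proof in turn depends on Grauert's coherence theorem for direct images under proper holomorphic maps. If one wished to avoid Remmert, the natural alternative is to establish holomorphic separability of $\inter(M)$ directly and combine it with the already-available holomorphic convexity from Grauert: given distinct $x,y\in \inter(M)$, one would produce $f\in\mathcal{O}(\inter(M))$ with $f(x)\ne f(y)$. The hard step there would be to promote the set
\[
S_x = \{z\in \inter(M) : f(z)=f(x) \text{ for all } f\in \mathcal{O}(\inter(M))\}
\]
to a compact analytic subvariety (using holomorphic convexity, the maximum principle, and local peak functions at points of $\partial M$), and then show that if $S_x$ contains a point other than $x$, its connected component through $x$ must be positive-dimensional, contradicting the no-subvarieties hypothesis. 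Either route reduces the theorem to a statement about the structure of fibers of the ring $\mathcal{O}(\inter(M))$, and the geometric input from Grauert plus the hypothesis suffices to collapse those fibers to points.
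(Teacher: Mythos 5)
The paper does not prove this statement; it is imported verbatim from Rossi's lecture notes (Corollary on p.~20 of \cite{R2}), so there is no internal proof to compare against. Your derivation via Grauert's theorem plus the Remmert reduction is correct and is essentially the standard argument: holomorphic convexity of $\inter(M)$ yields the reduction $\pi:\inter(M)\to N$ onto a Stein space whose fibers are exactly the maximal compact connected analytic subsets, and the no-subvarieties hypothesis collapses every fiber to a point. The one step worth phrasing more carefully is the conclusion: rather than asserting that a proper holomorphic bijection onto the (a priori possibly singular) complex space $N$ is a biholomorphism --- which does require invoking $\pi_*\mathcal{O}_{\inter(M)}=\mathcal{O}_N$ to upgrade the homeomorphism to an isomorphism of ringed spaces --- it is cleaner to note that one-point fibers mean global holomorphic functions separate points of $\inter(M)$, so $\inter(M)$ is both holomorphically convex and holomorphically separable, hence Stein by the characterization the paper itself records just above the theorem. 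Your second, ``Remmert-free'' sketch is not needed and is left too vague to count as a proof, but the first route stands on its own.
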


We now discuss strong quasiconvexity and Stein property  in the context of complex-hyperbolic manifolds. 
A classical example of a complex submanifold with Levi-convex boundary is a closed round ball $\ol{\B}^n$ in $\C^n$. Suppose that $\Ga< \Aut(\B^n)$ is a 
discrete torsion-free subgroup of the group of holomorphic automorphisms of $\B^n$ with (nonempty) domain of discontinuity 
$\Omega = \Omega(\Ga)\subset \partial \B^n$. The quotient 
$$
\ol{M}_\Ga= (\B^n \cup \Om)/\Ga
$$
is a smooth manifold with boundary.

\begin{lemma}\label{lem:spc}
$\ol{M}_\Ga$ is strongly pseudoconvex. 
\end{lemma}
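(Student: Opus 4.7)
The plan is to construct an ambient complex manifold $X$ of complex dimension $n$ into which $\ol M_\Ga$ embeds as a codimension-$0$ smooth submanifold with boundary (with $M_\Ga$ open in $X$), and then to exhibit a strictly Levi-convex defining function at each boundary point by descending the standard one for $\B^n\subset\P^n$.

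The model computation is classical. In the affine chart $\C^n\subset\P^n$ given by $[1:z_1:\cdots:z_n]$, which contains $\ol{\B^n}$, the function $\phi(z):=|z|^2-1$ cuts out $\B^n$ as $\{\phi\le 0\}$, has $d\phi\neq 0$ on $\partial\B^n$, and has holomorphic Hessian $\Hess(\phi)=\mathrm{Id}$. Hence $\B^n$ is strictly Levi-convex inside $\P^n$ in the sense of the paper. This is the only Levi-convexity calculation I will need; everything else is a descent argument.

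To construct the charts of $X$ and ultimately $X$ itself, I use that $\Ga$ is torsion-free and acts freely and properly discontinuously on $\B^n\cup\Om$. For each $q\in\Om$, properness furnishes an open neighborhood of the form $V_q=\tilde V_q\cap(\B^n\cup\Om)$ with $\tilde V_q\subset\P^n$ open and contained in the affine chart above, such that the $\Ga$-translates of $V_q$ in $\B^n\cup\Om$ are pairwise disjoint. A useful point here: one does \emph{not} need $\ga\tilde V_q\cap\tilde V_q=\emptyset$ inside $\P^n$ (such intersections can indeed occur at fixed points of $\ga$ in $\P^n\setminus\ol{\B^n}$), because only intersections inside $\B^n\cup\Om$ affect the quotient, and for those we have $\ga\tilde V_q\cap\tilde V_q\cap(\B^n\cup\Om)=\ga V_q\cap V_q=\emptyset$. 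Patching the $\tilde V_q$ along their overlaps in $\P^n$ and gluing to $M_\Ga$ along the maps induced by the quotient $\pi:\B^n\cup\Om\to\ol M_\Ga$ yields a Hausdorff complex $n$-manifold $X$ in which $\ol M_\Ga$ embeds with the desired properties; equivalently, $X$ is the quotient by $\Ga$ of a suitable $\Ga$-invariant open thickening of $\B^n\cup\Om$ in $\P^n$ on which $\Ga$ still acts freely and properly discontinuously.

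Finally, strict Levi-convexity is checked chart-by-chart: near $p=\pi(q)\in\partial\ol M_\Ga$, the function $\phi$ transports via the local biholomorphism $\tilde V_q\to X$ to a smooth function $\phi_p$ cutting out $\ol M_\Ga$ locally as $\{\phi_p\le 0\}$, and $\Hess(\phi_p)$ remains positive-definite because holomorphic biholomorphisms transform the holomorphic Hessian by conjugation with the holomorphic Jacobian, preserving positive-definiteness. The main obstacle I expect is not the Levi computation, which is immediate, but the bookkeeping in the previous paragraph: verifying that the local charts $\tilde V_q$ patch to a single Hausdorff ambient manifold $X$, which is a routine but slightly delicate consequence of the proper discontinuity of the $\Ga$-action on $\B^n\cup\Om$ and the equivariance of the projective charts.
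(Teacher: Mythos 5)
There is a genuine gap, and it sits exactly where you relegate the work to ``routine but slightly delicate'' bookkeeping. Your Levi-convexity computation with $\phi(z)=|z|^2-1$ is fine and is also what the paper implicitly uses; the actual content of the lemma is the construction of the ambient manifold $X$, i.e.\ the existence of a $\Ga$-invariant open thickening of $\B^n\cup\Om$ in $\P^n$ on which $\Ga$ still acts properly discontinuously (equivalently, the Hausdorffness of your patched-together $X$). This is not automatic from proper discontinuity on $\B^n\cup\Om$, and your key assertion --- that intersections $\ga\tilde V_q\cap\tilde V_{q}$ occurring outside $\B^n\cup\Om$ ``do not affect the quotient'' --- is precisely the unjustified step. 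The danger is not isolated exterior fixed points but accumulation: a divergent sequence $\ga_m\in\Ga$ converges on $\P^n$ (off an exceptional hyperplane) to a quasi-projective map whose image lies in the projective hyperplane tangent to $\D\B^n$ at the corresponding limit point, and these tangent hyperplanes protrude into $\P^n\setminus\ol{\B^n}$. Hence infinitely many translates $\ga_m\tilde V_q$ can meet a fixed chart $\tilde V_{q'}$ at exterior points even though $\ga_m V_q\cap V_{q'}=\emptyset$ for all large $m$; this destroys proper discontinuity on the union of the thickened charts and produces non-separated points in the glued space unless the $\tilde V_q$ are chosen to avoid all such tangent hyperplanes. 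Nothing in your setup rules this out.

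The paper closes exactly this gap by a global construction: it lets $T_\La$ be the union of all projective hyperplanes tangent to $\D\B^n$ at points of the limit set $\La$, takes $\widehat\Om$ to be the component of $\P^n\setminus T_\La$ containing $\B^n$ (a $\Ga$-invariant open set containing $\B^n\cup\Om$, since a tangent hyperplane meets $\D\B^n$ only at its point of tangency), and then invokes a nontrivial theorem of Cano--Navarrete--Seade \cite[Thm.~7.5.3]{CNS} asserting that $\Ga$ acts properly discontinuously on $\widehat\Om$. Then $X=\widehat\Om/\Ga$ is the desired ambient complex manifold and $\ol{M}_\Ga\subset X$ is strictly Levi-convex. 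To repair your argument you would either have to reprove that proper-discontinuity statement (essentially redoing the quasi-projective dynamics sketched above) or cite it; as written, the final sentence of your second paragraph simply asserts the conclusion of that theorem.
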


\begin{proof}
 We let $T_\La$ denote the union of all projective hyperplanes in $P_\C^n$ tangent to $\D \B^n$ at points of $\La$, the limit set of $\Ga$. Let 
$\widehat{\Om}$ denote the connected component of $P_\C^n \setminus T_\La$ containing $\B^n$. It is clear that $\B^n\cup \Om\subset \widehat\Om$ is strictly Levi-convex. By the construction, $\Ga$ preserves $\widehat\Om$. 
It is proven in \cite[Thm. 7.5.3]{CNS} that the action of $\Ga$ on $\widehat\Om$
 is properly discontinuous. Hence, $X:=\widehat\Om/\Ga$ is a complex manifold containing  $\ol{M}_\Ga$ as a strictly Levi-convex submanifold with boundary. 
\end{proof} 
 
Specializing to the case when $\ol{M}_\Ga$ is compact, i.e. $\Ga$ is convex-cocompact, we obtain:

\begin{proposition}\label{prop:stein}
Suppose that $\Ga$ is torsion-free, convex-cocompact and $n>1$. Then:
\begin{enumerate}[nolistsep]
 \item $\partial \ol{M}_\Ga$ is connected. 
 \item If $\inter(\ol{M}_\Ga) = M_\Ga$ contains no compact complex subvarieties of positive dimension, then $M_\Ga$ is Stein.  
\end{enumerate}
\end{proposition}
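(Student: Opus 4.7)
My plan is to handle the two parts separately. For part (2), essentially everything is already in place: by Lemma~\ref{lem:spc}, $\ol{M}_\Ga$ is strongly pseudoconvex; by convex-cocompactness of $\Ga$, $\ol{M}_\Ga$ is compact; and by hypothesis, $M_\Ga$ contains no compact complex subvarieties of positive dimension. These are precisely the hypotheses of Rossi's theorem (Theorem~\ref{thm:Rossi}), whose conclusion is that $M_\Ga = \inter(\ol{M}_\Ga)$ is Stein. So part (2) will follow immediately from Theorem~\ref{thm:Rossi}, once part (1) is not an obstacle (though part (1) is not logically needed for part (2)).

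For part (1), I would appeal to the following general complex-analytic principle: a connected compact complex manifold of complex dimension $\ge 2$ with smooth strongly pseudoconvex boundary automatically has connected boundary. Note that $\ol{M}_\Ga$ meets these hypotheses: it is compact (convex-cocompactness), strongly pseudoconvex (Lemma~\ref{lem:spc}), connected (as the $\Ga$-quotient of the connected set $\B^n \cup \Om(\Ga)$), and of complex dimension $n > 1$. To establish the general principle I would argue by contradiction. Suppose $\D\ol{M}_\Ga = \D_1 \sqcup \D_2$ with both $\D_i$ nonempty. The locally constant function $f$ taking the value $1$ on $\D_1$ and $0$ on $\D_2$ is tautologically CR on $\D\ol{M}_\Ga$. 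By a Hartogs--Bochner style extension (valid for strongly pseudoconvex boundaries in complex dimension $\ge 2$), $f$ extends to a holomorphic function $F$ on $M_\Ga$ that is continuous up to $\D\ol{M}_\Ga$. Then $F(F-1)$ is holomorphic on $M_\Ga$, continuous on $\ol{M}_\Ga$, and identically zero on $\D\ol{M}_\Ga$; the maximum modulus principle on the compact manifold $\ol{M}_\Ga$ forces $F(F-1) \equiv 0$ throughout. Thus $F$ takes only values in $\{0,1\}$, so by continuity on the connected manifold $M_\Ga$ it is constant, contradicting the two distinct prescribed boundary values.

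The hard part will be justifying the global Hartogs--Bochner extension in this setting; everything else in part (1) is routine. My preferred route is through Grauert's theorem, which says the interior $M_\Ga$ is holomorphically convex, together with its Remmert reduction $\pi: M_\Ga \to N$ mapping $M_\Ga$ properly onto a Stein space $N$ (contracting only compact positive-dimensional analytic subvarieties). On the Stein space $N$, Hartogs--Bochner is standard, and the extension pulls back along $\pi$ because CR functions on $\D\ol{M}_\Ga$ that are locally constant descend to the ends of $N$. An alternative route is to use the ambient embedding $\ol{M}_\Ga \hookrightarrow \widehat\Om/\Ga$ produced in the proof of Lemma~\ref{lem:spc}: local one-sided Kohn--Rossi extension at each boundary point (valid because the Levi form is strictly positive and $n \ge 2$) followed by global holomorphic continuation through $M_\Ga$. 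Either route uses $n \ge 2$ in an essential way, which is why the hypothesis $n > 1$ in the proposition is crucial.
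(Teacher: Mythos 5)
Your part (2) is exactly the paper's (implicit) argument: the paper offers no written proof of Proposition~\ref{prop:stein}, but the intended derivation is precisely Lemma~\ref{lem:spc} (strong pseudoconvexity) plus compactness of $\ol{M}_\Ga$ (convex-cocompactness) fed into Rossi's Theorem~\ref{thm:Rossi}. You are also right that part (1) is not logically needed for part (2), and indeed the proof of Theorem~\ref{thm:stein} only invokes part (2). So the half of the proposition that the paper actually uses is handled correctly and in the same way.

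For part (1), the general principle you invoke (a connected compact strongly pseudoconvex manifold of complex dimension $\ge 2$ has connected boundary) is true and is the right statement, but your justification has a genuine gap at exactly the point you flag. The classical Hartogs--Bochner theorem extends CR functions from a \emph{connected} compact hypersurface bounding a domain; invoking ``a Hartogs--Bochner style extension'' of a locally constant function on a disconnected boundary is very close to assuming what you want to prove. Neither of your two routes closes this. The Kohn--Rossi route is empty here: the local one-sided extension of a locally constant CR function is just the constant itself, so you obtain $F\equiv 1$ on a collar of $\D_1$ and $F\equiv 0$ on a collar of $\D_2$, and the entire difficulty is the ``global holomorphic continuation through $M_\Ga$,'' which is equivalent to the one-endedness you are trying to establish. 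The Remmert-reduction route is the right idea but the sentence ``on the Stein space $N$, Hartogs--Bochner is standard'' does not apply: $N$ is not a relatively compact domain with boundary inside a larger Stein space, so there is no hypersurface from which to extend. What actually finishes the argument is the Hartogs phenomenon \emph{at infinity} for the Stein space $N$ of dimension $n\ge 2$: every holomorphic function on $N\setminus K$ ($K$ compact) extends to $N$, which follows from the vanishing of $H^1_c(N,\mathcal O_N)$ (Serre duality plus $H^{n-1}(N,K_N)=0$ for $N$ Stein, $n\ge 2$). Applying this to the function equal to $0$ near one end and $1$ near the other, and then the identity theorem on the connected $N$, gives the contradiction; your maximum-principle endgame is then unnecessary (though not wrong). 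Since the paper itself states part (1) without proof and treats it as standard, supplying this $H^1_c$ argument (or a precise citation to Rossi/Grauert for connectedness of the boundary) is what your write-up is missing.
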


For example, as it was observed in \cite{BS}, the quotient-manifold $\B^2/\Ga$ of a real-Fuchsian subgroup $\Ga< \Aut(\B^2)$ is Stein while the quotient-manifold 
 of a complex-Fuchsian subgroup $\Ga<  \Aut(\B^2)$ is non-Stein. 

\section{Proof of Theorem \ref{thm:stein}}

In this section, we construct certain {plurisubharmonic} functions on $M_\Ga$, for each finitely generated, discrete subgroup $\Gamma < \Aut(\B^n)$ satisfying $\delta(\Gamma) < 2$.
We use these functions to show that $M_\Ga$ has no compact subvarieties of positive dimension.
At the end of this section, we  prove the main result of this paper.

Let $X$ be a complex manifold.
Recall that a continuous function $f:X\rightarrow \R$ is called {\em plurisubharmonic}\footnote{There is a more general notion of {\em plurisubharmonic functions}; for our purpose, we only consider this restrictive definition.} if for any homomorphic map $\phi : V( \subset\C) \ra X$, the composition $f\circ \phi$ is subharmonic. 
Plurisubharmonic functions $f$ satisfy the maximum principle; 
in particular, if $f$ restricts to a nonconstant function on a connected complex subvariety $Y\subset X$, then $Y$ is noncompact. 
 
Now we turn to our construction of plurisubharmonic functions.
Let $\Ga< \Aut(\B^n)$ be a discrete subgroup.
Consider the Poincar\'e series
\begin{equation}\label{eqn:series}
 \sum_{\gamma\in\Gamma} ({1 - |\gamma(z)|^2}), \quad z\in\B^n.
\end{equation}

\begin{lemma}\label{prop:convergence}
Suppose that $\delta(\Gamma)<2$. Then (\ref{eqn:series}) uniformly converges on compact sets.
\end{lemma}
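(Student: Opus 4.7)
The plan is to estimate the individual term $1 - |\gamma(z)|^2$ in terms of the displacement distance $d(0, \gamma 0)$, and then invoke the definition of the critical exponent. The key input is the identity (\ref{eqn:dist}), which gives $1 - |w|^2 = \mathrm{sech}^2(d(0,w))$ for every $w \in \B^n$. Since $\cosh(t) \geq \tfrac{1}{2} e^{t}$ for $t \geq 0$, this yields the pointwise bound
\[
1 - |w|^2 \leq 4 e^{-2 d(0, w)}.
\]

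Next I would transfer this bound from $w = \gamma(z)$ (which depends on $z$) to a quantity depending only on $\gamma$, at the cost of a constant that is uniform on any compact set $K \subset \B^n$. Setting $R := \max_{z \in K} d(0, z) < \infty$, the triangle inequality together with the fact that $\gamma$ acts isometrically gives, for every $z \in K$,
\[
d(0, \gamma z) \;\geq\; d(0, \gamma 0) - d(\gamma 0, \gamma z) \;=\; d(0, \gamma 0) - d(0, z) \;\geq\; d(0, \gamma 0) - R.
\]
Substituting this into the previous estimate yields
\[
0 \leq 1 - |\gamma(z)|^2 \;\leq\; 4 e^{2R} e^{-2 d(0, \gamma 0)} \qquad \text{for all } z \in K.
\]

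Finally, since $\delta(\Ga) < 2$, the value $s = 2$ is strictly greater than the critical exponent, so by the very definition of $\delta(\Ga)$ the series $\sum_{\gamma \in \Ga} e^{-2 d(0, \gamma 0)}$ converges. The Weierstrass M-test then gives uniform convergence of (\ref{eqn:series}) on the compact set $K$, as required. There is no real obstacle here: the only slightly non-obvious ingredient is the passage from $\gamma(z)$ to $\gamma(0)$ through the triangle inequality, which is what furnishes the uniformity over $K$.
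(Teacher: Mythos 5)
Your proof is correct and follows essentially the same route as the paper: both use the distance identity (\ref{eqn:dist}) to bound $1-|\gamma(z)|^2\le 4e^{-2d(0,\gamma(z))}$ and then invoke convergence of the exponent-$2$ Poincar\'e series. The only difference is that you spell out, via the triangle inequality, the locally uniform comparison with $e^{-2d(0,\gamma(0))}$ that the paper leaves implicit when it asserts that the Poincar\'e series converges uniformly on compact sets.
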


\begin{proof}
Since $\delta(\Gamma) < 2$, the Poincar\'e series
\[
 \sum_{\gamma\in\Gamma} e^{ -2 d(0,\gamma(z))}
\]
uniformly converges on compact subsets in $\B^n$. By (\ref{eqn:dist}),
we get
\begin{equation}\label{eqn:bound}
 e^{-2d(0,\gamma(z))} \le (1 - |\gamma(z)|^2) \le 4 e^{-2d(0,\gamma(z))}.
\end{equation}
Then, the result follows from the upper inequality.
\end{proof}

\begin{remark}\label{rem:series_diverge}
Note that when $\delta(\Gamma)>2$, or when $\Gamma$ is of {\em divergent type} (e.g., convex-cocompact) and $\delta(\Gamma) = 2$, then (\ref{eqn:series}) does not converge. This follows from the lower inequality of (\ref{eqn:bound}).
\end{remark}

Assume that $\delta(\Gamma)<2$. Define $F: \B^n\rightarrow \mathbb{R}$,
\[
F(z) = \sum_{\gamma\in\Gamma} ({ |\gamma(z)|^2 - 1}).
\]
Since $F$ is $\Gamma$-invariant, i.e., $F(\gamma z) = F(z)$, for all $\gamma\in\Gamma$ and all $z\in\B^n$, $F$ descends  to a function $$f : M_\Ga  \rightarrow \mathbb{R}.$$

\begin{lemma}\label{prop:psh}
The function $f : M_\Ga \rightarrow \mathbb{R}$ is plurisubharmonic.
\end{lemma}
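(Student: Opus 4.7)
The plan is to establish plurisubharmonicity of the lift $F$ on $\B^n$ and then descend the conclusion to $f$ on $M_\Ga$ using $\Gamma$-invariance together with the fact that $\Gamma$ acts by biholomorphisms. Since plurisubharmonicity is a local property preserved by biholomorphic changes of coordinates, the descent step is automatic, so the entire content of the lemma lies in showing $F$ is plurisubharmonic upstairs on $\B^n$.

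The first step I would carry out is to verify that each individual summand $z \mapsto |\gamma(z)|^2 - 1$ is plurisubharmonic on $\B^n$. Writing $\gamma(z)=(\gamma_1(z),\dots,\gamma_n(z))$ in the affine coordinates of $\B^n\subset\C^n$, one has
\[
|\gamma(z)|^2 = \sum_{k=1}^n |\gamma_k(z)|^2,
\]
a finite sum of squared moduli of holomorphic functions. For any holomorphic disk $\phi: V\subset\C\to \B^n$ the composition $\gamma_k\circ\phi$ is holomorphic, so $|\gamma_k\circ\phi|^2$ is subharmonic on $V$ (its Laplacian equals $4\,|(\gamma_k\circ\phi)'|^2\ge 0$). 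Summing finitely many and subtracting the constant $1$ preserves subharmonicity along every holomorphic disk, so each summand is plurisubharmonic on $\B^n$.

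Next, any finite partial sum $F_N(z)=\sum_{i=1}^N (|\gamma_i(z)|^2-1)$ is a finite sum of plurisubharmonic functions, hence plurisubharmonic. By Lemma \ref{prop:convergence}, $F_N\to F$ uniformly on compact subsets of $\B^n$, and a standard fact from potential theory says that a uniform-on-compacta limit of plurisubharmonic functions is again plurisubharmonic: continuity passes to the limit, and the sub-mean-value inequality along every circle in every holomorphic disk is preserved under uniform limits. Thus $F$ is plurisubharmonic on $\B^n$, and since $F(\gamma z)=F(z)$ for every $\gamma\in\Gamma$, it descends to the required plurisubharmonic function $f$ on $M_\Ga$. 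The only step that needs any care is the uniform-limit-of-plurisubharmonic argument, which is entirely standard; the essential input from the hypothesis $\delta(\Gamma)<2$ is exactly the convergence supplied by Lemma \ref{prop:convergence}.
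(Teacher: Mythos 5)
Your proof is correct and follows essentially the same route as the paper's: each summand $|\gamma(z)|^2-1$ is plurisubharmonic, hence so is each partial sum, and one passes to the limit and descends by $\Gamma$-invariance. The only difference is in justifying the limit step --- the paper observes that the partial sums decrease monotonically and invokes the fact that a decreasing limit of plurisubharmonic functions is plurisubharmonic, whereas you use uniform convergence on compacta from Lemma \ref{prop:convergence}; both are valid, and since the paper's definition of plurisubharmonicity requires continuity of the limit, your uniform-convergence argument is, if anything, the more self-contained of the two.
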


\begin{proof}
Enumerate $\Gamma$ as $\Gamma = \{\gamma_1,\gamma_2,\dots\}$.
Consider the sequence of partial sums of the series $F$,
\[
S_k(z) = \sum_{j\le k} ({ |\gamma_j(z)|^2 - 1}).
\]
Since each summand in the above is plurisubharmonic\footnote{This follows from the fact that the function $|z|^2$ is plurisubharmonic.}, $S_k$ is plurisubharmonic for each $k\ge 1$.
Moreover, the sequence of functions $S_k$ is monotonically decreasing. 
Thus, the limit $F = \lim_{k\ra\infty} S_k$ is also plurisubharmonic, and hence so is $f$.
\end{proof}

Note, however, that at this point we do not yet know that the function $f$ is nonconstant. 

\medskip 
Now we prove the main result of this section.

\begin{theorem}\label{thm:no_subv}
Let $\Gamma$ be a torsion-free
discrete subgroup of $\Aut(\B^n)$. If $\delta(\Gamma) < 2$, then
$M_\Ga$ contains no compact complex subvarieties of positive dimension.
\end{theorem}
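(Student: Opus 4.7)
The plan is to upgrade the plurisubharmonicity of $f$ from Lemma~\ref{prop:psh} to \emph{strict} plurisubharmonicity; compact positive-dimensional subvarieties are then ruled out by combining the maximum principle with this strictness. The key observation is that the identity term of the defining series already supplies strictness on its own. Writing
\[
F(z) = \bigl(|z|^2 - 1\bigr) + G(z), \qquad G(z) := \sum_{\ga\in\Ga\setminus\{e\}} \bigl(|\ga(z)|^2 - 1\bigr),
\]
the first summand is smooth and strictly plurisubharmonic, with complex Hessian equal to the identity matrix. Each term $|\ga(z)|^2$ in $G$ is smooth and plurisubharmonic (its complex Hessian equals $J_\ga^* J_\ga \ge 0$, where $J_\ga$ is the Jacobian of the biholomorphism $\ga$), and the argument of Lemma~\ref{prop:psh} applied to the tail series shows $G$ is plurisubharmonic. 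Consequently $F$ is strictly plurisubharmonic on $\B^n$, and $\Ga$-invariance descends this to $f$ on $M_\Ga$.

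Now suppose for contradiction that $Y \subset M_\Ga$ is a compact connected complex subvariety of positive dimension. Plurisubharmonicity of $f$ combined with compactness of $Y$ and the maximum principle (applied to $f|_Y$ via local holomorphic parametrizations at smooth points and propagated by connectedness) forces $f|_Y \equiv c$ for some constant $c$. On the other hand, pick any smooth point $p\in Y$ and a local holomorphic parametrization $\phi:\Delta\to Y$ with $d\phi(0)\ne 0$; then the Laplacian of $f\circ\phi$ at $0$ equals the Levi form of $f$ evaluated on the nonzero vector $d\phi(\partial_z)$, which is strictly positive by strict plurisubharmonicity. Hence $f\circ\phi$ is nonconstant near $0$, contradicting $f|_Y\equiv c$.

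The main technical point to verify is strict plurisubharmonicity of $F$, since $F$ is defined only as a pointwise series. There are two clean routes: either establish $C^2$-convergence of the series on compact subsets of $\B^n$ by bounding derivatives of $|\ga(z)|^2$ with the same exponential decay $e^{-2d(0,\ga(0))}$ that governs the values (feasible because each $\ga$ is a projective M\"obius map with uniformly controlled derivatives on compact sets) and then differentiating term by term; or, working with Levi forms as currents, note that $G = F - (|z|^2-1)$ is the decreasing limit of smooth plurisubharmonic functions, hence plurisubharmonic, so $i\D\bar\D F \ge i\D\bar\D|z|^2$ as $(1,1)$-currents, which is already incompatible with $f\circ\phi$ being constant along any holomorphic disk $\phi$ with $d\phi\ne 0$.
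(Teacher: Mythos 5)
Your proof is correct, and it takes a genuinely different route from the paper's. You decompose $F = (|z|^2-1) + G$ and extract \emph{strictness} of the plurisubharmonicity from the identity term alone: since $G$ is a decreasing limit of smooth plurisubharmonic functions (hence plurisubharmonic and, by Lemma~\ref{prop:convergence}, continuous), one gets $F - (|z|^2-1)$ plurisubharmonic, so along any holomorphic disk $\phi$ with $d\phi(0)\neq 0$ the distributional Laplacian of $F\circ\phi$ dominates the strictly positive continuous function $\Delta(|\phi|^2)$, which is incompatible with $F\circ\phi$ being constant; combined with the maximum principle forcing $f$ to be constant on any compact connected subvariety, this yields the contradiction. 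The paper instead never addresses strictness: it reduces to a finitely generated subgroup, uses residual finiteness to produce finite-index subgroups $\Ga_k<\Ga$ avoiding the ball of radius $k$, forms the corresponding $\Ga_k$-invariant series $F_k$ converging uniformly on compacts to $|z|^2-1$, and observes that since a lift $\widetilde Y\subset\B^n$ of the putative subvariety is closed and noncompact, $|z|^2-1$ and hence some $F_k$ is nonconstant on $\widetilde Y$, contradicting the maximum principle on the compact subvariety $Y_k$ in the finite cover $M_{\Ga_k}$. Your argument buys several things: it avoids the reduction to finite generation, residual finiteness, and passage to finite covers altogether; it works with the single function $f$ on $M_\Ga$ itself; and it answers the paper's parenthetical remark that ``we do not yet know that $f$ is nonconstant,'' since strict plurisubharmonicity shows $f$ is nonconstant on every positive-dimensional subvariety. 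What the paper's approach buys is that it sidesteps any discussion of strict plurisubharmonicity for the merely continuous function $F$ (regularity of the sum, Levi forms as currents), needing only the soft statement that a decreasing limit of plurisubharmonic functions is plurisubharmonic. One small point to tighten in your write-up: rather than restricting the current $i\D\bar\D F$ to the disk, it is cleaner (as you in effect do) to compose first and compute the distributional Laplacian of $(|\phi|^2-1) + G\circ\phi$, using only that $G\circ\phi$ is subharmonic; with that phrasing the argument is complete.
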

\begin{proof} Suppose that $Y$ is a compact connected subvariety of positive dimension in $M_\Ga$. Since $\pi_1(Y)$ is finitely generated, so is its image $\Ga'$ in $\Ga=\pi_1(M_\Ga)$. Since $\delta(\Ga')\le \delta(\Ga)$, by passing to the subgroup $\Ga'$ we can (and will) assume that the group $\Ga$ is finitely generated.

We construct a sequence of functions $F_k: \B^n\ra \R$ as follows. For $k \in \N$, let $\Sigma_k\subset \Ga-\{1\}$ denote the subset consisting of $\ga\in \Ga$  
satisfying $d(0,\gamma(0)) \le k$. Since $\Ga$ is a finitely generated linear group, it is residually finite and, hence, there exists a finite index subgroup $\Ga_k<\Ga$ disjoint from $\Sigma_k$. 
For each $k\in \N$, define $F_k: \B^n\ra \R$ as the sum 
\[
F_k(z) = \sum_{\gamma\in\Gamma_k} ({ |\gamma(z)|^2 - 1}).
\]
Since
$$
\bigcap_{k\in \N} \Ga_k=\{1\}, 
$$
the sequence of functions $F_k$ converges to $(|z|^2-1)$ uniformly  on compact subsets of $\B^n$. As before, each $F_k$ is 
plurisubharmonic
(cf. Lemmata \ref{prop:convergence}, \ref{prop:psh}).

Let $\widetilde Y$ be a connected component of  the preimage of $Y$ under the projection map $\B^n \ra M_\Ga$.
Since $\widetilde Y$ is a closed, noncompact subset of $\B^n$, the function $(|z|^2-1)$ is nonconstant on $\widetilde Y$.
As the sequence $(F_k)$ converges to $(|z|^2-1)$ uniformly on compacts, there exists $k\in \N$ such that $F_k$ is nonconstant on $\widetilde Y$. 
Let $f_k : M_k = M_{\Gamma_k} \ra \R$ denote the function obtained by projecting $F_k$  to $M_k$, and $Y_k$ be the image of $\widetilde Y$ under the projection map $\B^n\ra M_k$.
Since $M_k$ is a finite covering of $M_\Ga$, the subvariety $Y_k\subset M_k$ is compact.
Moreover, $f_k$ is a nonconstant plurisubharmonic function on $Y_k$ since $F_k$ is such a function on $\widetilde Y$.
This contradicts the maximum principle.
\end{proof}

\begin{remark}\label{rem:no_subv}
Regarding Remark \ref{rem:series_diverge}: The failure of convergence of the series (\ref{eqn:series}) as pointed out in Remark \ref{rem:series_diverge} is not so surprising. In fact, if $\Gamma$ is a complex Fuchsian group, then $\delta(\Gamma) = 2$ and the convex core of $M_\Ga$ is a compact Riemann surface, see Example \ref{ex:cFuch}.
 Thus, our construction of $F$ must fail in this case. 
\end{remark}

We conclude this section with a proof of the main result of this paper.

\begin{proof}[Proof of Theorem \ref{thm:stein}]
By Theorem \ref{thm:no_subv}, $M_\Ga$ does not have compact complex subvarieties of positive dimensions.
Then, by the second part of Proposition \ref{prop:stein}, $M_\Ga$ is Stein.
\end{proof}

\section{Further remarks}\label{sec:remarks}

In relation to Theorem \ref{thm:stein}, it is also interesting to understand the case when $\delta(\Gamma) = 2$, that is: 
For which convex-cocompact, torsion-free subgroups $\Ga$ of $\Aut(\B^n)$ satisfying $\delta(\Ga)=2$, is the manifold $M_\Ga$ Stein?
It has been pointed out before that a complex Fuchsian subgroup $\Ga< \Aut(\B^n)$ satisfies $\delta(\Ga) = 2$, but the manifold $M_\Ga$ is not Stein. In fact, the convex core of $M_\Ga$ is a complex curve, see Remark \ref{rem:no_subv}.
We conjecture that complex Fuchsian subgroups are the only such non-Stein examples.

\begin{conjecture}
Let $\Gamma<\Aut(\B^n)$ be a convex-cocompact, torsion-free subgroup such that $\delta(\Gamma) = 2$. Then, $M_\Ga$ is non-Stein if and only if $\Gamma$ is a complex Fuchsian subgroup.
\end{conjecture}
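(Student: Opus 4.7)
The \textbf{if} direction is immediate: a complex Fuchsian $\Gamma$ preserves a complex geodesic $C\subset\B^n$ and acts cocompactly on it, so $C/\Gamma$ is a compact Riemann surface sitting in $M_\Gamma$. Any holomorphic function on $M_\Gamma$ restricts to a constant on $C/\Gamma$ by the maximum principle, so $M_\Gamma$ fails to be holomorphically separable and is therefore not Stein.

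For the \textbf{only if} direction, assume $M_\Gamma$ is not Stein. Convex-cocompactness together with Lemma \ref{lem:spc} make $\ol{M}_\Gamma$ a compact strongly pseudoconvex manifold, so by Proposition \ref{prop:stein}(2) it must contain a compact connected complex subvariety $Y$ of positive dimension. The plan is to show that, combined with $\delta(\Gamma)=2$, this forces $\Gamma$ to be complex Fuchsian.

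The first step is to lift: choose a connected component $\tilde Y$ of the preimage of $Y$ in $\B^n$, and let $\Gamma'=\mathrm{Stab}_\Gamma(\tilde Y)$, a finitely generated subgroup acting cocompactly on $\tilde Y$. The key ingredient is a Schwarz--Ahlfors / Royden type comparison: the K\"ahler metric induced on $\tilde Y$ has holomorphic sectional curvatures bounded above by $-4$, so the exponential growth rate of a $\Gamma'$-orbit is at least $2\dim_\C Y$, yielding $\delta(\Gamma')\ge 2\dim_\C Y$. Combined with $\delta(\Gamma')\le\delta(\Gamma)=2$ this forces $\dim_\C Y=1$ and $\delta(\Gamma')=2$, and the equality case of Schwarz--Ahlfors identifies $\tilde Y$ as a complex geodesic disk $C$; hence $\Gamma'$ is itself a complex Fuchsian subgroup of $\Gamma$.

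The remaining, and hardest, step is to promote $\Gamma'$ to all of $\Gamma$. Since $\Gamma'\le\Gamma$ are both convex-cocompact with critical exponent $2$, I would argue via ergodicity/uniqueness of the Patterson--Sullivan measure (or via a Bowen-type rigidity statement for complex quasi-Fuchsian deformations) that $\Lambda(\Gamma)=\Lambda(\Gamma')=\partial C$; since a complex geodesic is determined by its boundary circle in $\partial\B^n$, this identifies $\Gamma$ with a subgroup of the stabilizer of $C$ in $\Aut(\B^n)$, hence complex Fuchsian. The principal obstacle is precisely this last rigidity step: ruling out a strict inclusion $\Gamma'\subsetneq\Gamma$ whose limit set is strictly larger than $\partial C$ while still satisfying $\delta(\Gamma)=2$. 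This is where the borderline hypothesis $\delta=2$ must do real work, and I expect it to require either a delicate Patterson--Sullivan analysis on $\partial\B^n$ or a complex-hyperbolic analogue of Bowen's rigidity theorem characterizing Fuchsian limit sets by their Hausdorff dimension.
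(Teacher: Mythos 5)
The statement you are addressing is stated in the paper as a \emph{conjecture}: the authors do not prove it, and only verify a very special case (surface groups whose equivariant harmonic map is a holomorphic immersion), using the Besson--Courtois--Gallot entropy inequality and Toledo's rigidity theorem for the Toledo invariant. So there is no paper proof to compare against; the real question is whether your argument closes the conjecture, and it does not. Your ``if'' direction is correct, and so is the reduction of the ``only if'' direction to the existence of a compact complex subvariety $Y\subset M_\Ga$ of positive dimension via the contrapositive of Proposition \ref{prop:stein}(2); moreover, when $\dim_{\C}Y=1$ the bound $\delta(\Ga')\ge 2$ for the stabilizer $\Ga'$ of a component $\tilde Y$ does follow from Theorem \ref{thm:no_subv} applied to $\Ga'$. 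But the two steps that carry all the weight are asserted rather than proved.

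First, the claim that $\delta(\Ga')=2$ plus ``the equality case of Schwarz--Ahlfors'' identifies $\tilde Y$ as a complex geodesic is precisely the hard rigidity content of the conjecture, and no off-the-shelf statement of this kind exists: an upper bound on the holomorphic sectional curvature of the (possibly singular) induced metric on $\tilde Y$ does not by itself bound the orbit growth rate from below, let alone characterize the equality case; the paper's partial result needs the harmonic-map hypothesis, the BCG inequality, and Toledo's $c(\phi)=4\pi(g-1)$ rigidity to reach this conclusion, and only for surface groups. (Your intermediate claim $\delta(\Ga')\ge 2\dim_{\C}Y$ for higher-dimensional $Y$ is itself the paper's other open conjecture, established there only under the stronger hypothesis $\delta<2k-1$.) Second, you explicitly concede that you cannot rule out a strict inclusion $\Ga'\subsetneq\Ga$ with $\Lambda(\Ga)$ strictly larger than the boundary circle of the complex geodesic while $\delta(\Ga)=2$; no Patterson--Sullivan or Bowen-type rigidity theorem of the required form is currently available in $\H^n_\C$. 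What you have is a reasonable strategy and an accurate diagnosis of where the difficulty sits, but both rigidity steps remain open, so this is an outline of an attack on an open problem, not a proof.
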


We illustrate this conjecture in the following very special case: Let $\phi: \pi_1(\Sigma) \ra \Aut(\B^n)$ be a faithful convex-cocompact representation where $\Sigma$ is a compact Riemann surface of genus $g \ge 2$.
Then $\phi$ induces a (unique) equivariant harmonic map 
\[
F: \widetilde\Sigma \ra \B^n. 
\]
which descends to a harmonic map $f: \Sigma\to M_\Ga$.

\begin{proposition}
Suppose that $F$ is a holomorphic immersion. Then $\Gamma = \phi(\pi_1(\Sigma))$ satisfies $\delta(\Gamma)\ge 2$. Moreover, if $\delta(\Gamma)= 2$, then $\Gamma$ preserves a complex line. In particular, $\Gamma$ is a complex Fuchsian subgroup of $\Aut(\B^n)$.
\end{proposition}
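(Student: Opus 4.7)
The plan is to compare the pullback $F^*h$ with the Poincaré metric on the universal cover $\widetilde\Sigma$ via the Ahlfors--Schwarz lemma, and then to turn a pointwise metric bound into a Poincaré-series bound. Since $\Sigma$ has genus $g\ge 2$, I uniformize $\widetilde\Sigma\cong\mathbb{D}$ and equip it with the $\pi_1(\Sigma)$-invariant Poincaré metric $g_0$ of constant Gauss curvature $-4$; the deck action of $\pi_1(\Sigma)$ on $(\widetilde\Sigma,g_0)$ is isometric and cocompact, and because this space is the hyperbolic plane of curvature $-4$, the critical exponent of that action is exactly $2$.

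Since $F$ is a holomorphic immersion into $\B^n$ and $\B^n$ has constant holomorphic sectional curvature $-4$, the Gauss equation for complex submanifolds of a K\"ahler manifold forces $K(F^*h)\le -4$ pointwise on $\widetilde\Sigma$, with equality everywhere iff the second fundamental form of $F$ vanishes identically. The Ahlfors--Schwarz lemma then gives $F^*h\le g_0$ pointwise, and integration along paths yields
\[
d_{\B^n}(F(p),F(q))\;\le\;d_{F^*h}(p,q)\;\le\;d_{g_0}(p,q).
\]
Setting $x_0:=F(p_0)$, the $\phi$-equivariance of $F$ gives $d_{\B^n}(x_0,\phi(\gamma)x_0)\le d_{g_0}(p_0,\gamma p_0)$ for every $\gamma\in\pi_1(\Sigma)$, so the Poincar\'e series for $\Gamma$ on $\B^n$ dominates that of $\pi_1(\Sigma)$ on $(\widetilde\Sigma,g_0)$. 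Since the latter has critical exponent $2$, this gives $\delta(\Gamma)\ge 2$.

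To handle the case $\delta(\Gamma)=2$, I will establish a rigidity dichotomy for the Ahlfors--Schwarz comparison. Writing $F^*h=\lambda_h|dz|^2$ and $g_0=\lambda_0|dz|^2$ in a local holomorphic coordinate, the non-negative function $w:=\log(\lambda_0/\lambda_h)$ satisfies a linear elliptic inequality of the form $\Delta w-8\lambda_0 w\le 0$, coming from the standard Ahlfors computation together with $K(F^*h)\le -4=K(g_0)$. By the strong maximum principle, either $w\equiv 0$ (so $F^*h\equiv g_0$) or $w>0$ at every point of $\widetilde\Sigma$. In the latter case $w$ descends to the compact surface $\Sigma$, so there is $\epsilon\in(0,1)$ with $F^*h\le (1-\epsilon)g_0$ uniformly; then the comparison above sharpens to $d_{\B^n}(x_0,\phi(\gamma)x_0)\le \sqrt{1-\epsilon}\,d_{g_0}(p_0,\gamma p_0)$, and the Poincar\'e-series argument improves to $\delta(\Gamma)\ge 2/\sqrt{1-\epsilon}>2$, contradicting the assumption $\delta(\Gamma)=2$. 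Thus $F^*h\equiv g_0$, so $K(F^*h)\equiv -4$, and the Gauss equation forces the second fundamental form of $F$ to vanish.

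Consequently $F$ is a totally geodesic holomorphic immersion, and its image is an open subset of a complex geodesic $L\subset\B^n$; by $\phi$-equivariance and analytic continuation, each $\phi(\gamma)$ sends $L$ into $L$, so $\Gamma$ preserves $L$ and is a complex Fuchsian subgroup. The main technical obstacle is the rigidity step above: verifying the strong-maximum-principle dichotomy for $w$, and then converting strict pointwise inequality $F^*h<g_0$ into a uniform multiplicative gap on the compact quotient $\Sigma$ that sharpens the Poincar\'e-series lower bound on $\delta(\Gamma)$ strictly past $2$.
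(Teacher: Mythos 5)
Your argument is correct, but it takes a genuinely different route from the paper. The paper obtains $\delta(\Gamma)\ge 2$ immediately from Theorem \ref{thm:no_subv} (the plurisubharmonic Poincar\'e-series construction), since $f(\Sigma)$ is a compact complex curve in $M_\Ga$ --- a step that does not even use that $F$ is an immersion. For the rigidity statement the paper compares the volume entropy of $Y=(\widetilde\Sigma,F^*h)$ with $\delta(\Ga)$, invokes the Besson--Courtois--Gallot entropy--area inequality to get $\mathrm{Area}(Y/\Ga)\ge\pi(g-1)$, identifies $4\,\mathrm{Area}(Y/\Ga)$ with the Toledo invariant $c(\phi)$, and concludes $c(\phi)=4\pi(g-1)$, so that Toledo's rigidity theorem supplies the invariant complex line. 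You replace both steps by the Ahlfors--Schwarz lemma: the Gauss-equation bound $K(F^*h)\le-4$ plus the Schwarz lemma gives the $1$-Lipschitz orbit comparison and hence $\delta(\Ga)\ge 2$, and the strong-maximum-principle dichotomy (either $F^*h\equiv g_0$, or a uniform gap $F^*h\le(1-\epsilon)g_0$ on the compact quotient, which would force $\delta(\Ga)>2$) yields total geodesy directly. The key computation checks out: with $K(g_0)=-4$ one gets $\Delta w=8\lambda_0+2K_h\lambda_h\le 8\lambda_0(1-e^{-w})\le 8\lambda_0 w$ for $w\ge 0$, so the Hopf principle applies as you claim, and the equality case of the Gauss equation then kills the second fundamental form. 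Your route is more self-contained --- it avoids citing BCG and Toledo, in effect reproving Toledo's rigidity in this special case, and it gives the slightly stronger conclusion that $F$ itself is a totally geodesic isometry onto a complex geodesic. The trade-offs: your first part leans on the immersion hypothesis (though Yau's Schwarz lemma applied to the map $F$ would avoid it), whereas the paper's does not; and the paper's method, as its closing remark notes, extends to non-immersive $F$ via metrics with singularities, while your function $w=\log(\lambda_0/\lambda_h)$ blows up at critical points of $F$ and would need extra care there.
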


\begin{proof}
Noting that $M_\Ga$ contains a compact complex curve, namely $f(\Sigma)$, the first part follows directly from Theorem \ref{thm:stein}.

For the second part, we let $Y$ denote the surface $\tilde\Sigma$ equipped with the Riemannian metric obtained via pull-back of the Riemannian metric $g$ on $\B^n$.  The entropy\footnote{The {\em volume entropy} of a simply connected Riemannian manifold $(X,g)$ is defined as $\lim_{r\ra\infty} \log\vol(B(r,x))/r$, where $x\in X$ is a chosen base-point and $B(r,x)$ denotes the ball of radius $r$ centered at $x$. This limit exists and is independent of $x$, see \cite{Manning}.} $h(Y)$ of $Y$ is bounded above by $\delta(\Ga)$, i.e.
\begin{equation}\label{ineq:entropy}
 h(Y) \le 2.
 \end{equation}
This can be seen as follows: 
The distance function $d_Y$ on $Y$ satisfies $$d_Y(y_1,y_2) \ge d(F(y_1), F(y_2)).$$
Therefore, the exponential growth-rate $\delta_Y$ of $\pi_1(\Sigma)$-orbits in $Y$ satisfies $\delta_Y\le \delta(\Ga)$. On the other hand, the quantity $\delta_Y=h(Y)$ since $\pi_1(\Sigma)$ acts cocompactly on $Y$. 

Assume that $\widetilde\Sigma$ is endowed with a conformal Riemannian metric of constant   $-4$ sectional curvature.
Since $\tilde\Sigma$ is  a symmetric space, we have
\[
h^2(Y)\mathrm{Area}(Y/\Gamma) \ge h^2(\widetilde\Sigma)\mathrm{Area}(\Sigma),
\]
see \cite[p. 624]{BCG}.
The inequality (\ref{ineq:entropy}) together with the above implies that $\mathrm{Area}(Y/\Gamma) \ge \mathrm{Area}(\Sigma)$.

On the other hand, since $f: Y/\Gamma\to M_\Ga$ is holomorphic, $4\cdot \mathrm{Area}(Y/\Gamma)$ equals to the Toledo invariant $c(\phi)$ (see \cite{Toledo}) of the representation $\phi$. Since $c(\phi) \le 4\pi(g-1)$, the inequality $\mathrm{Area}(Y/\Gamma) \ge \mathrm{Area}(\Sigma) = \pi(g-1)$ shows that $\mathrm{Area}(Y/\Gamma) = \pi(g-1)$ or, equivalently, $c(\phi) = 4\pi(g-1)$.
By the main result of \cite{Toledo}, $\Gamma$ preserves a complex-hyperbolic line in $\B^n$.
\end{proof}

\begin{remark}
The assumption that $F$ is an immersion can be eliminated: Instead of working with a Riemannian metric, one can work with a  Riemannian metric with finitely many singularities. 
\end{remark}

Motivated by Theorem \ref{thm:no_subv}, we also make the following conjecture.

\begin{conjecture}
 If $\Ga< \Aut(\B^n)$ is discrete, torsion-free, and $\delta(\Ga)<2k$, then $M_\Ga$ does not contain compact complex subvarieties of dimension $\ge k$. 
\end{conjecture}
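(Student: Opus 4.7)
My plan is to mirror the strategy of Theorem \ref{thm:no_subv}, replacing plurisubharmonic functions by their higher-order analogues. Recall that a continuous function $f$ on a complex manifold $M$ is called \emph{$k$-plurisubharmonic} if for every holomorphic map $\phi: V \to M$ from a $k$-dimensional complex manifold $V$, the composition $f \circ \phi$ is subharmonic on $V$. For $C^2$ functions this is equivalent to requiring that at each point the sum of the $k$ smallest eigenvalues of the complex Hessian be non-negative. The maximum principle then says that a $k$-plurisubharmonic function is constant on every compact irreducible complex subvariety of dimension $\ge k$: starting from an interior maximum, subharmonicity along $k$-dimensional slices spreads constancy, and connectedness finishes the argument.

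The heart of the proof would be to construct a non-negative $k$-plurisubharmonic function $\psi_k$ on $\B^n$ satisfying $\psi_k(z) \asymp (1-|z|^2)^k$ near $\D\B^n$. Given $1-|\ga z|^2 \asymp e^{-2d(0,\ga z)}$, the Poincar\'e series $F_k(z) = \sum_{\ga \in \Ga} \psi_k(\ga z)$ then converges locally uniformly precisely when $\delta(\Ga) < 2k$; it is $\Ga$-invariant and inherits $k$-plurisubharmonicity from its summands via stability of this property under monotone convergent sums (as in Lemma \ref{prop:psh}).

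Granted such a $\psi_k$, the remaining argument parallels Theorem \ref{thm:no_subv}. Suppose $Y \subset M_\Ga$ is a compact connected complex subvariety of dimension $\ge k$; replacing $\Ga$ by the image of $\pi_1(Y) \to \Ga$ (whose critical exponent is at most $\delta(\Ga)$) we reduce to the case $\Ga$ finitely generated. Using residual finiteness, take a nested sequence of finite-index normal subgroups $\Ga_j \triangleleft \Ga$ with trivial intersection, and form $F_{k,j}(z) = \sum_{\ga \in \Ga_j} \psi_k(\ga z)$. Since $\psi_k \to 0$ at $\D\B^n$, the function $\psi_k$ is nonconstant on any lift $\widetilde Y \subset \B^n$ of $Y$ (which is closed and noncompact). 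As $F_{k,j} \to \psi_k$ uniformly on compacts when $j \to \infty$, we deduce that $F_{k,j}$ is nonconstant on $\widetilde Y$ for some large $j$. Its descent to $M_{\Ga_j}$ is then a nonconstant $k$-plurisubharmonic function on the compact finite cover of $Y$ inside $M_{\Ga_j}$, contradicting the maximum principle.

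The main obstacle is the construction of $\psi_k$. The naive candidate $\psi_k(z) = (1-|z|^2)^k$ fails: a direct calculation shows its complex Hessian has one positive (radial) eigenvalue and $n-1$ non-positive (tangential) eigenvalues, so the sum of the $k$ smallest is negative whenever $k < n$. A more refined construction is needed, and natural candidates include integrals of the form $\int_{\D\B^n} |1 - \<z,\xi\>|^{-2k}\,d\mu(\xi)$ against suitable positive measures on the boundary (the integrands being squared moduli of powers of the Poisson-Szego kernel, each plurisubharmonic in $z$), or squared norms of Poincar\'e-series sections of an appropriate power of the canonical bundle of $\B^n$ equipped with a weight adapted to the Bergman metric. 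Verifying that some such candidate simultaneously achieves the $(1-|z|^2)^k$-decay and the $k$-plurisubharmonicity condition is the crux of the conjecture; a secondary but nontrivial point is making the maximum principle for $k$-plurisubharmonic functions rigorous on compact subvarieties of dimension strictly greater than $k$, which requires slicing $Y$ by germs of $k$-dimensional analytic subsets through any would-be maximum point.
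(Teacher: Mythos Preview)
The statement is a \emph{conjecture}; the paper does not prove it. What the paper does prove is the weaker Proposition \ref{prop:2k}, under the stronger hypothesis $\delta(\Ga)<2k-1$, and by an entirely different method: the Besson--Courtois--Gallot natural map $f:M_\Ga\to M_\Ga$ homotopic to the identity whose $p$-Jacobian is strictly less than $1$ for $p\ge 2k$, contradicting the volume-minimizing property of a compact complex subvariety in its homology class. There is no analogue in the paper of the $k$-plurisubharmonic Poincar\'e series you propose.

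Your outline is the natural higher-$k$ extension of the $k=1$ argument in Theorem \ref{thm:no_subv}, and the reduction steps (passage to a finitely generated subgroup, residual finiteness, convergence $F_{k,j}\to\psi_k$, maximum principle on a finite cover) are all sound in principle. You have also correctly located the genuine obstruction: the existence of a $k$-plurisubharmonic $\psi_k$ on $\B^n$ with $\psi_k\asymp(1-|z|^2)^k$. Your computation showing that $(1-|z|^2)^k$ itself fails is right, and neither of your suggested replacements (boundary integrals of $|1-\langle z,\xi\rangle|^{-2k}$, norms of Poincar\'e-series sections) comes with a verification of the required Hessian condition. Until that construction is supplied, what you have is a plausible strategy rather than a proof --- which is consistent with the statement remaining a conjecture in the paper.
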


We conclude this section with a verification of this conjecture under a stronger hypothesis.

\begin{proposition}\label{prop:2k}
  If $\Ga< \Aut(\B^n)$ is discrete,  torsion-free, and $\delta(\Ga)<2k-1$, then $M_\Ga$ does not contain compact complex subvarieties of dimension $\ge k$. 
\end{proposition}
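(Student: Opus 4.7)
The plan is to prove the proposition by contradiction, combining a Bishop--G\"unther volume comparison on a $\Gamma$-invariant lift of $Y$ with the orbit-counting interpretation of the critical exponent. Suppose for contradiction that $M_\Ga$ contains a compact irreducible complex subvariety $Y$ of dimension $k$. Exactly as in the proof of Theorem~\ref{thm:no_subv}, I would first replace $\Gamma$ by the image of $\pi_1(Y)$ in $\Gamma$, a finitely generated subgroup whose critical exponent is still strictly less than $2k-1$. Let $\widetilde{Y}\subset\B^n$ be a connected component of the preimage of $Y$, and let $\Gamma'<\Gamma$ be its stabilizer; then $\Gamma'$ acts cocompactly on $\widetilde Y$, and $\delta(\Gamma')\le\delta(\Gamma)<2k-1$.

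Next I would establish the volume lower bound $\vol\bigl(\widetilde Y\cap B(x_0,r)\bigr)\gtrsim e^{(2k-1)r}$, where $B(x_0,r)$ denotes the Bergman ball of radius $r$ centered at a point $x_0\in\widetilde Y$. On its smooth locus, $\widetilde Y$ is a K\"ahler submanifold of $\H^n_\C$ of complex dimension $k$, and the Gauss equation for holomorphic submanifolds of K\"ahler manifolds yields that the sectional curvature of $\widetilde Y$ is bounded above by the restriction of the ambient sectional curvature to $T\widetilde Y$, which is itself $\le -1$. The Bishop--G\"unther comparison theorem then shows that the intrinsic volume of a geodesic ball in $\widetilde Y$ of radius $r$ is bounded below by the volume of a ball of radius $r$ in $\H^{2k}_\R$, which grows like $e^{(2k-1)r}$. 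Since the induced Riemannian distance on $\widetilde Y$ dominates the restriction of the Bergman distance on $\B^n$, the intrinsic ball of radius $r$ is contained in $\widetilde Y\cap B(x_0,r)$, yielding the claimed estimate.

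To extract a contradiction, I would relate this volume to the orbit-counting function. Let $F\subset\widetilde Y$ be a compact fundamental domain for $\Gamma'$, of diameter $D$ with respect to the Bergman metric. Any $\gamma\in\Gamma'$ with $\gamma F\cap B(x_0,r)\neq\emptyset$ satisfies $d(x_0,\gamma x_0)\le r+D$, whence
\[
\vol\bigl(\widetilde Y\cap B(x_0,r)\bigr)\le\#\bigl\{\gamma\in\Gamma':d(x_0,\gamma x_0)\le r+D\bigr\}\cdot\vol(F).
\]
Comparing with the volume lower bound forces $\delta(\Gamma')\ge 2k-1$, contradicting $\delta(\Gamma')<2k-1$.

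The main technical obstacle is that Bishop--G\"unther is stated for complete smooth Riemannian manifolds, while $\widetilde Y$ may be singular. I would address this either by passing to a resolution of singularities of $Y$ and lifting to obtain a smooth complete model, or by exhausting the smooth locus of $\widetilde Y$ by compact subsets and noting that a tubular neighborhood of the singular set has strictly smaller complex dimension, so contributes only lower-order terms to the volume growth. The gap between the bound $2k-1$ proved here and the conjectured sharp bound $2k$ reflects the fact that this argument uses only the universal sectional curvature upper bound $-1$ of $\H^n_\C$; improving to $2k$ would presumably require a Lelong-type monotonicity in the Bergman metric that exploits the holomorphic sectional curvature $-4$.
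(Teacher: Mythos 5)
Your strategy (volume growth of the lift $\widetilde Y$ versus orbit counting) is genuinely different from the paper's, which instead uses the Besson--Courtois--Gallot natural map $f:M_\Ga\to M_\Ga$ homotopic to the identity, the Jacobian bound $|\mathrm{Jac}_p(f)|\le((\delta(\Ga)+1)/p)^p$, and the fact that a compact complex subvariety is a volume minimizer in its homology class (Wirtinger), so that a volume-contracting map homotopic to the identity yields a contradiction. Your counting step is fine: given $\vol(\widetilde Y\cap B(x_0,r))\gtrsim e^{(2k-1)r}$ and a compact fundamental domain for the stabilizer $\Gamma'$ acting on $\widetilde Y$, one does get $\delta(\Gamma')\ge 2k-1$, a contradiction.

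However, the step that produces the volume lower bound has a genuine gap: it is \emph{not} true that the Gauss equation forces the sectional curvature of a complex submanifold of a K\"ahler manifold to be at most the ambient sectional curvature. What decreases on K\"ahler submanifolds is the holomorphic sectional (and bisectional) curvature; sectional curvatures of totally real $2$-planes can \emph{increase}. Concretely, for the graph of $z^2+w^2$ in $\C^3$ (flat K\"ahler), the Gauss equation gives, for the totally real plane spanned by $\partial_{x_1},\partial_{x_2}$ at the origin,
\[
K=\left\langle II(\partial_{x_1},\partial_{x_1}),II(\partial_{x_2},\partial_{x_2})\right\rangle-\left| II(\partial_{x_1},\partial_{x_2})\right|^2=\langle 2\partial_{u_1},2\partial_{u_1}\rangle-0=4>0,
\]
so the term $\langle II(X,X),II(Y,Y)\rangle-|II(X,Y)|^2$ need not be nonpositive for complex submanifolds. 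Hence you cannot apply Bishop--G\"unther to the intrinsic metric of $\widetilde Y$ with the upper bound $-1$; passing to a resolution of singularities does not help, since the curvature claim already fails on the smooth locus. The estimate you want is nonetheless true, but for a different reason: a complex subvariety is minimal (indeed area-minimizing as a current, which also disposes of the singularities), and for an $m$-dimensional minimal subvariety through $x_0$ in a Cartan--Hadamard manifold with $K\le-1$ the comparison $\Hess(r)\ge\coth(r)(g-dr\otimes dr)$ gives $\Delta_{\widetilde Y}\,r\ge(m-1)\coth r$, whence $\vol(\widetilde Y\cap B(x_0,r))\gtrsim e^{(m-1)r}$ with $m=2k$. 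Replacing your Gauss/Bishop--G\"unther step by this minimal-submanifold volume growth estimate repairs the argument and yields the proposition by your counting scheme.
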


\begin{proof}
Note that if $\Gamma$ is elementary (i.e., virtually abelian), then $\delta(\Gamma) = 0$. In this case, the result follows from Theorem \ref{thm:no_subv}.
For the rest, we assume that $\Gamma$ is nonelementary.

 By \cite[Sec. 4]{BCG2}, there is a natural map $f: M_\Ga\ra M_\Ga$ homotopic to the identity map $\mathrm{id}_{M_\Ga}: M_\Ga\ra M_\Ga$ and satisfying
 \[
 |\mathrm{Jac}_p(f)| \le \left(\frac{\delta(\Ga)+1}{p}\right)^p,\quad 2\le p\le 2n,
 \]
 where $\mathrm{Jac}_p(f)$ denotes the $p$-Jacobian of $f$.
 When $\delta(\Ga)<2k-1$, we have $|\mathrm{Jac}_p(f)|<1$, for  $p\in[2k,2n]$.
 This means that $f$ strictly contracts the volume form on each $p$-dimensional tangent space at every point $x\in M_\Ga$, for $p\in [2k,2n]$.
 
 Let $Y\subset M_\Ga$ be a compact complex subvariety of dimension $\ge k$ (real dimension $\ge 2k$).
 Then, $Y$ is also a volume minimizer in its homology class.
 Since $f$ strictly contracts volume on $Y$, $f(Y)$ has volume strictly lesser than that of $Y$.
 However, $f$ being homotopic to $\mathrm{id}_{M_\Ga}$, $f(Y)$  belongs to the homology class of $Y$.
 This is a contradiction to the fact that $Y$ minimizes volume its homology class.
\end{proof}

\begin{remark}
 Note that Proposition \ref{prop:2k} gives an alternative proof of Theorem \ref{thm:no_subv} (hence Theorem \ref{thm:stein}) under a stronger hypothesis, namely $\delta(\Gamma)\in(0,1)$.
 However, this method fails to verify Theorem \ref{thm:no_subv} in the case when $\delta(\Gamma)\in[1,2)$.
\end{remark}

Finally, we note that the papers \cite{Chen} and \cite{Yue} contain  other interesting 
results and conjectures on Stein properties of complex-hyperbolic manifolds.

\bibliographystyle{abbrv}

\end{document}